\newtheorem{theorem}{Theorem}[section]
\newtheorem{lemma}[theorem]{Lemma}
\newtheorem{corollary}[theorem]{Corollary}
\theoremstyle{definition}
\newtheorem{example}[theorem]{Example}
\theoremstyle{remark}
\newtheorem{remark}[theorem]{Remark}
\def\Fq{{\mathbb F}_q}
\def\a{{\alpha}}
\def\AA{{\mathbb A}}
\def\PP{{\mathbb P}}
\def\dP{\widehat{\mathbb P}}
\def\Z{{\mathbb Z}}
\newcommand{\RM}{\mathrm{RM}}
\newcommand{\PRM}{\mathrm{PRM}}
\def\PP{{\mathbb P}}
\newcommand{\w}{{\mathrm{w_H}}}
\begin{document}

\title[Hypersurfaces over finite fields]{On a conjecture of Tsfasman and an inequality of Serre for the number of points of hypersurfaces over finite fields}


\dedicatory{Dedicated to Misha Tsfasman and Serge Vl\u{a}du\c{t} on their $60^{\rm th}$ birthdays}

\author{Mrinmoy Datta}
\address{Department of Mathematics,
Indian Institute of Technology Bombay,\newline \indent
Powai, Mumbai 400076, India.}
\curraddr{}
\email{mrinmoy.dat@gmail.com}
\thanks{The first named author is partially supported by a doctoral fellowship from the National Board for Higher Mathematics, a division of the Department of Atomic Energy, Govt. of India.}

\author{Sudhir R. Ghorpade}
\address{Department of Mathematics, 
Indian Institute of Technology Bombay,\newline \indent
Powai, Mumbai 400076, India.}
\email{srg@math.iitb.ac.in}
\thanks{The second named author is partially supported by Indo-Russian project INT/RFBR/P-114 from the Department of Science \& Technology, Govt. of India and  IRCC Award grant 12IRAWD009 from IIT Bombay.}


\date{\today}

\begin{abstract}
We give a short proof of an inequality, conjectured by Tsfasman and proved by Serre, for the maximum number of points of hypersurfaces over finite fields. Further, we consider a conjectural extension, due to Tsfasman and Boguslavsky,  of this inequality to an explicit formula for the maximum number of common solutions of a system of linearly independent multivariate homogeneous polynomials of the same degree with coefficients in a finite field.  This conjecture is shown to be false, in general, but is also shown to hold in the affirmative in a special case. Applications to generalized hamming weights of projective Reed-Muller codes are outlined and a comparison with an older conjecture of Lachaud and a recent result of Couvreur is given. 
\end{abstract}

\maketitle


\section{Introduction}
What is the maximum number of $\Fq$-rational points that a hypersurface of degree $d$ in $m$-space over the finite field $\Fq$ with $q$ elements can have? An intuitive approach could be to project the $m$-space onto an $(m-1)$-space. If this projection map from
the hypersurface to the $(m-1)$-space is flat, then a point below has at most $d$ points above on the hypersurface. This suggests that $d$ times the number of $\Fq$-rational points in the $(m-1)$-space is a natural upper bound. More precisely, if $f\in \Fq[x_1, \dots , x_m]$ is 
of degree $d$ and $Z(f):=\{P\in \AA^m(\Fq): f(P)=0\}$ the corresponding affine hypersurface or if $F\in \Fq[x_0, x_1, \dots , x_m]$ is a (nonzero) homogeneous polynomial of degree $d$ and  $V(F):=\{P\in \PP^m(\Fq): F(P)=0\}$ the corresponding projective hypersurface, then 
\begin{equation}
\label{BasicBound}
|Z(f)| \le dq^{m-1} \quad \text{ and } \quad |V(F)| \le dp_{m-1},
\end{equation}
where for any $j\in \Z$, we have set 
\begin{equation}
\label{pm}
p_j := |\PP^j(\Fq)| = q^j + q^{j-1} + \dots + q + 1 \text{ if $j\ge 0$ and  $p_j := 0$ if $j<0$.}
\end{equation} 
The bounds in \eqref{BasicBound} are true and a precise proof can be easily given using double induction on $d$ and $m$; see, e.g., \cite[pp. 275--276]{LN}. Variants of the bound in  \eqref{BasicBound} for $|Z(f)|$ are also known in the literature as Schwarz-Zippel Lemma or more elaborately, as Schwarz-Zippel-DeMillo-Lipton Lemma, although it can be traced more than 50 years earlier to the work of O. Ore in 1922; see, e.g., \cite[p. 320]{LN}. 
 If $d>q$,  the bound $dq^{m-1}$ exceeds $|\AA^m(\Fq)|$ and is hence uninteresting, whereas if $d\le q$, then it is attained as can be seen readily by considering the polynomial $g_d(x_1, \dots , x_m) = (x_1-a_1) \cdots (x_1 - a_d)$, where $a_1, \dots , a_d$ are distinct elements of $\Fq$.  Thus $dq^{m-1}$ is the maximum value for $|Z(f)|$ when $d\le q$.  
Likewise in the projective case,  if $d\ge q+1$, 
the bound  $d p_{m-1}$ exceeds   $|\PP^m(\Fq)|$ and so it  is uninteresting in that case. 
However, if $d\le q$ and $a_1, \dots , a_d$ are as before, then it is easy to see that the analogous homogeneous 
polynomial $G_d(x_0,x_1, \dots , x_m) = (x_1-a_1x_0) \cdots (x_1 - a_dx_0)$ has exactly $dq^{m-1} + p_{m-2}$ zeros in $\PP^m(\Fq)$. Also if $d=q+1$, then $G_{q+1}:= x_0G_q$ has exactly $dq^{m-1} + p_{m-2}$
zeros in $\PP^m(\Fq)$. But  $dq^{m-1} + p_{m-2} < dp_{m-1}$  if $d> 1$ and $m>1$. Moreover, an example of a homogeneous polynomial of degree $d\le q$ with exactly $dp_{m-1}$ zeros is difficult to come by. Motivated perhaps by this, M. A. Tsfasman made a conjecture, in the late 80's, that $dq^{m-1} + p_{m-2}$ is the ``true upper bound''. In other words,
\begin{equation}
\label{SerreIneq}
 |V(F)| \le dq^{m-1} + p_{m-2}  \ \text{ and hence } \ \max_F  |V(F)| = dq^{m-1} + p_{m-2}  \ \text{ if } d\le q+1,
\end{equation}
where $F$ varies over homogeneous polynomials of degree $d$ in $\Fq[x_0, x_1, \dots , x_m]$. 
This conjecture was soon proved in the affirmative by J.-P. Serre \cite{Se}, and thus we will refer to the inequality in \eqref{SerreIneq} as Serre's inequality. 
An alternative proof of \eqref{SerreIneq} was also given later by S{\o}rensen \cite{So}. 
Some years later, Tsfasman and Boguslavsky  \cite{Bog} formulated more general conjectures for the maximum number of points in $\PP^m(\Fq)$ of systems of polynomials equations of the form
\begin{equation}
\label{Fr}
\left\{ \begin{array}{l} F_1(x_0, x_1, \dots , x_m) = 0 \\
\vdots \\
F_r(x_0, x_1, \dots , x_m) = 0 \end{array} \right.
\end{equation}
A quantitative version of their conjecture essentially states that if $d < q-1$, then 
\begin{equation}
\label{TBr}
\max_{F_1, \dots , F_r} |V(F_1, \dots , F_r) | = \displaystyle{   p_{m-2j} + \sum_{i=j}^m} \nu_i (p_{m-i} - p_{m-i-j}),
\end{equation}
where $F_1, \dots , F_r$ vary over linearly independent homogeneous polynomials of degree $d$ in 
$\Fq[x_0, x_1, \dots , x_m]$, and 
where $(\nu_1, \dots, \nu_{m+1})$ is the $r$th element of the set
$$
\Lambda(d,m):=\left\{(\alpha_1, \dots , \alpha_{m+1}) \in \Z^{m+1} : \alpha_1, \dots , \alpha_{m+1} \ge 0 \text{ and }  \alpha_1+ \cdots + \alpha_{m+1} = d\right\}
$$
ordered in descending lexicographic order, 
and where $j := \min\{i : \nu_i \ne 0\}$. 

Observe that $(d,0,\dots , 0)$ is lexicographically the first element of $\Lambda(d,m)$
and so 
\eqref{TBr} reduces to 
\eqref{SerreIneq} in the case $r=1$, and is thus true, thanks to Serre \cite{Se} and S{\o}rensen \cite{So}. The conjecture was proved in the affirmative in the next case of $r=2$ by Boguslavsky in 1997. Also 
\eqref{TBr} holds trivially when $d=1$. But the general case appears to have been open for almost two decades. Moreover, some auxiliary conjectures given in \cite{Bog} (see also Remark \ref{ThreeConj} in Section~\ref{sec3} below) that would imply the Tsfasman-Boguslavsky Conjecture have also been open. 

We are now ready to describe the main results of this paper. In Section \ref{sec2} below, we give a very short proof of Serre's inequality. 
Serre's original proof is quite elegant and S{\o}rensen's proof has its merits and applications. 
Both the proofs  involve some clever double counting argument, which are avoided in the short proof 
given here,  
thus making it comparable to the elementary proofs of \eqref{BasicBound} that one can find  
in textbooks \cite[Theorems 6.13, 6.15]{LN}. Next, we consider the Tsfasman-Boguslavsky Conjecture (TBC) stated above and show that while it 
is true if 
$d=2$ and $r\le m+1$, it is false, in general, when $d=2$ and $r\ge m+2$. In a forthcoming paper \cite{DG2} it is shown that the TBC 
is true for any positive integer $d$, provided $r \le m+1$. In the last section, we outline connections with coding theory and show that the TBC and in particular, our results are intimately related to the explicit determination of generalized hamming weights of projective Reed-Muller codes. We also compare the TBC with a recent result of Couvreur \cite{C} on an upper bound for the number of $\Fq$-rational points of arbitrary projective varieties defined over $\Fq$. 

A key ingredient for us is the work of Zanella \cite{Z} on the number of $\Fq$-rational points on intersections of quadrics or equivalently, on linear sections of the 
variety defined by the quadratic Veronese embedding  $\PP^m \hookrightarrow \PP^M$, where $M= {\binom{m+2}{2}} - 1$.

\section{Serre's Inequality}
\label{sec2}

Throughout this paper, $m$ denotes a positive integer, $q$ a prime power, $\Fq$ the field with $q$ elements. Also for any nonnegative integer $j$, we will denote by $\PP^j$ the $j$-dimensional projective space over $\Fq$, and by $\dP^j$ its dual, consisting of all hyperplanes in $\PP^j$. Note that $|\PP^j (\Fq)| = |\dP^j (\Fq)| = p_j$, where $p_j$ is as in \eqref{pm}. 
The following lemma is due to Zanella \cite[Lemma 3.3]{Z}. A proof is included for the sake of completeness. Alternative proofs are indicated in Remark \ref{rem:ZanellaLemma} below. 

\begin{lemma}
\label{Zanella}
Let 
$X \subseteq \mathbb{P}^{m}$ and 
$a:=\max\{ |X \cap H| : H \in \dP^m\}$. Then 
$|X| \le a q + 1$.
\end{lemma}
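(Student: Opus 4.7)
The plan is to prove this by a straightforward double counting of incidences between the points of $X$ and the hyperplanes of $\PP^m$, which is arguably the most natural thing to try.

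First I would count, in two different ways, the set of pairs $(P,H)$ with $P \in X$, $H \in \dP^m$, and $P \in H$. Summing over $P \in X$ on one side, each point of $\PP^m$ lies on exactly $p_{m-1}$ hyperplanes (by the self-duality of projective space, equivalently the hyperplanes through a fixed point form a $\dP^{m-1}$), so the total is $|X|\, p_{m-1}$. Summing over $H \in \dP^m$ on the other side, each hyperplane contributes $|X \cap H| \le a$ by the hypothesis, and there are $|\dP^m| = p_m$ hyperplanes in all. This yields the inequality $|X|\, p_{m-1} \le a\, p_m$.

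Now I would use the recursion $p_m = q\, p_{m-1} + 1$ together with the trivial bound $a \le p_{m-1}$ (which holds because $a \le |H| = p_{m-1}$ for any hyperplane $H$). From $|X| \le a\, p_m / p_{m-1} = aq + a/p_{m-1}$, the upper bound $a/p_{m-1} \le 1$ and the integrality of $|X|$ together give $|X| \le aq + 1$, as desired.

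There is no real obstacle here: the argument is a one-line double counting, and the only subtlety is the final integer-rounding step, which requires noting that $a$ cannot exceed $p_{m-1}$. The edge case $X = \emptyset$ (so that $a = 0$) is trivially covered since $|X| = 0 \le 1$.
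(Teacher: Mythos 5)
Your argument is correct: the incidence count gives $|X|\,p_{m-1} \le a\,p_m$, and since $p_m = q\,p_{m-1}+1$ and $a \le p_{m-1}$, one gets $|X| \le aq + a/p_{m-1} \le aq+1$ (in fact the final step needs no appeal to integrality, since $a/p_{m-1}\le 1$ already). However, this is a genuinely different route from the proof the paper gives for Lemma~\ref{Zanella}, which proceeds by induction on $m$: one picks a hyperplane $H^*$ attaining the maximum $a$, applies the induction hypothesis inside $H^*$ to a best hyperplane $H'$ of $H^*$ meeting $Y = H^*\cap X$ in $b$ points (so $a \le bq+1$), and then decomposes $X$ into $H'\cap X$ and its complement, the latter being covered by the $q+1$ hyperplanes of $\PP^m$ containing $H'$, each contributing at most $a-b$ new points; this yields $|X| \le b + (q+1)(a-b) \le aq+1$. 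The authors avoid the double count deliberately, so that their proof of Serre's inequality stays at the level of the elementary textbook proofs of the basic bound \eqref{BasicBound}. Your incidence-counting argument is exactly the alternative sketched in Remark~\ref{rem:ZanellaLemma} (there attributed to a suggestion of Homma), and it is shorter and cleaner at the price of invoking the two-way count over the point--hyperplane incidence set; the paper's inductive proof buys a self-contained argument in the same elementary style as the rest of Section~\ref{sec2}. Either way the lemma stands, so your proposal is a valid, if independently rediscovered, version of the paper's own stated alternative.
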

\begin{proof}
Induct on $m$.  
The case $m=1$ being trivial, assume that $m>1$ and that the result holds for smaller values of $m$.
Let  $H^* \in \dP^m$ be such that $|H^* \cap X| = a.$ Let $Y:= H^* \cap X$ and let 
$H'$ be a hyperplane of $H^*$ such that $b := |H' \cap Y|$ is the maximum among the cardinalities of all hyperplane sections of $Y$ in $H^*\simeq \PP^{m-1}$. By induction hypothesis, $a \le bq+1$. Now we write 
$|X| = |H' \cap X| + |X \setminus (H' \cap X)|$ and observe that on the one hand $|H' \cap X| = |H' \cap Y| \le b$, whereas on the other hand, every $P\in  X \setminus (H' \cap X)$ is contained in a unique 
$H \in \dP^m$ with $H \supset H'$. Thus 
$$
\left |X \setminus (H' \cap X) \right| \le \left| \bigcup_{H \supset H'}  (H \cap X) \setminus (H'\cap X) \right| \le
(q+1) (a-b), 
$$
where the last inequality follows by noting that the number of $H \in \dP^m$ containing a fixed codimension $2$ linear subspace $H'$ of $\PP^{m}$ is $q+1$ and also noting that $\left| H \cap X\right|  - \left| H'\cap X \right| \le a- |H' \cap Y| = a-b$ for any such $H$.   Since $a \le bq+1$, it follows that 
$|X| \le  b+ (q + 1)(a - b)  = aq + a - bq \le aq+ 1$, as desired. 
\end{proof}

\begin{theorem}[Serre]  
\label{Serre} 
If $F\in \Fq[x_0, x_1, \dots , x_m]$ is homogeneous of degree $d$, then 
$ |V(F)| \le dq^{m-1} + p_{m-2}$. Consequently,  $\displaystyle{ \max_F  |V(F)| = dq^{m-1} + p_{m-2}}$  if $d\le q+1$. 
\end{theorem}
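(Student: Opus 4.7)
The plan is to proceed by induction on $m$, with the argument branching according to whether or not $F$ admits a linear factor in $\Fq[x_0,\dots,x_m]$. The base case $m=1$ is immediate, since a nonzero binary form of degree $d$ has at most $d$ zeros in $\PP^1$, matching $dq^0+p_{-1}=d$. Assume now $m\ge 2$.

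If $F$ has no linear factor over $\Fq$, then for every hyperplane $H\in\dP^m$ the restriction $F|_H$ is a nonzero homogeneous form of degree $d$ on $H\simeq\PP^{m-1}$, and the induction hypothesis applied to $F|_H$ gives $|V(F)\cap H|\le dq^{m-2}+p_{m-3}$. Feeding this uniform upper bound into Lemma \ref{Zanella} with $X=V(F)$ yields
\[
|V(F)|\le (dq^{m-2}+p_{m-3})q+1 = dq^{m-1}+p_{m-2},
\]
using the identity $qp_{m-3}+1=p_{m-2}$. This is exactly the step where Zanella's lemma replaces the double-counting arguments of Serre and S{\o}rensen.

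If instead $F=LG$ for some linear form $L$ and some homogeneous $G$ of degree $d-1$, then $V(F)=V(L)\cup V(G)$, and the points of $V(G)$ outside $V(L)$ lie in the affine chart $\AA^m=\PP^m\setminus V(L)$. Dehomogenizing $G$ with respect to $L$ produces an affine polynomial of degree at most $d-1$, which by the affine inequality in \eqref{BasicBound} has at most $(d-1)q^{m-1}$ zeros in $\AA^m(\Fq)$. Therefore $|V(F)|\le p_{m-1}+(d-1)q^{m-1}=dq^{m-1}+p_{m-2}$, on the nose. The only subtlety is the choice of case split: the algebraic condition ``$F$ has a linear factor over $\Fq$'' is the right one, rather than the weaker geometric condition ``some $\Fq$-rational hyperplane lies inside $V(F)$'', since the two diverge at $d=q+1$, where a hyperplane may lie in $V(F)$ without $F$ having the corresponding linear factor; the algebraic split correctly places such examples in the first branch, where the induction hypothesis $dq^{m-2}+p_{m-3}$ coincides with $p_{m-1}$ at $d=q+1$ and so still feeds Zanella cleanly. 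The attainment statement for $d\le q+1$ is witnessed by the polynomial $G_d$ exhibited in the introduction.
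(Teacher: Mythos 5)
Your proof is correct and follows essentially the same route as the paper: induction on $m$, with Lemma \ref{Zanella} absorbing the case where every hyperplane section of $V(F)$ is proper, and a factored linear form together with the affine bound in \eqref{BasicBound} handling the other case, and with attainment witnessed by $G_d$. The one refinement is your case split on ``$F$ has a linear factor over $\Fq$'' rather than the paper's ``$V(F)$ contains a hyperplane'': the paper's Case~2 tacitly uses that $F$ vanishing at all $\Fq$-points of $H$ forces $F|_H=0$ as a polynomial, which is automatic only for $d\le q$ (for $d\ge q+1$ the bound is $\ge p_m$ and holds trivially anyway), whereas your algebraic split makes the implication $F|_H\neq 0$ immediate and so avoids this point entirely.
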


\begin{proof}
We use induction on $m$. The case $m=1$ is trivial. Thus we assume $m>1$ and 
consider the following two cases. 

\noindent
{\bf Case 1:} $V(F)$ does not contain any hyperplane in $\PP^m$. 

In this case $F|_H \neq 0$ for 
all $H \in \dP^{m}$. So the validity of the result 
with $m$ replaced by $m-1$ implies 
$|V(F) \cap H| \le dq^{m-2} + p_{m-3}$ for 
all $H \in \dP^{m}$. 
Hence by Lemma~\ref{Zanella}, $|V(F)| \le dq^{m-1} + p_{m-2}$.
Thus the desired inequality follows by induction on $m$.

\noindent
{\bf Case 2:} $V(F)$ contains a hyperplane in $\PP^m$, say $H = V(h)$. 

In this case $|V(F) \cap H| = |H| = p_{m-1}$. We will now estimate $|V(F) \cap H^c|$. First,  by a suitable linear change of  coordinates in $\PP^m$ assume that $h = x_0$. Since $F_{|H}=0$, we can write $F = x_0G$ 
 for some homogeneous $G\in \Fq[x_0, \dots ,x_m]$ of degree ${d-1}$. Moreover $V(F) \cap H^c$
corresponds to the zeros in $\AA^m(\Fq)$ of the polynomial $G(1, x_1, \dots , x_m)$. Hence 
$|V(F) \cap H^c| \le (d-1)q^{m-1}$, thanks to \eqref{BasicBound}. Consequently, 
$$
 |V(F)| =   |V(F) \cap H^c| + |V(F) \cap H|  \le (d-1)q^{m-1} + p_{m-1} =dq^{m-1} + p_{m-2} . 
$$
The assertion about $\displaystyle{ \max_F  |V(F)| }$ follows from the example of $G_d$ given earlier. 
\end{proof}

\begin{remark} 
\label{rem:ZanellaLemma}
Variants of Lemma \ref{Zanella} are seen elsewhere in the literature. For example, with notation as in  Lemma \ref{Zanella}, Homma \cite[Prop. 2.2]{H} has proved the following:
$$
|X| \le (a-1)q + 1 + \left\lfloor \frac{a-1}{p_{m-2}} \right\rfloor.
$$ 
To see that this implies $|X|\le aq+1$, 
observe that $a\le p_{m-1} = qp_{m-2}+1$ and thus
$ \left\lfloor {(a-1)}/{p_{m-2}} \right\rfloor \le q$. We can also deduce Lemma \ref{Zanella} from the Plotkin bound \cite[Thm. 1.1.45]{TVN} of coding theory. Indeed, 
we may assume without loss of generality that $X$ is not contained in a hyperplane of $\PP^m$ (for otherwise, $|X| = a < aq+1$). Now by \cite[Thm. 1.1.6]{TVN},  $X \subset \PP^m$ corresponds to a nondegenerate linear $[n,k,d]_q$-code $C$ 
with $n= |X|$, $k= m+1$ and $d = n - a$. 
The Plotkin bound gives 
$$
d \le \frac{nq^k(q-1)}{(q^k-1)q} = \frac{nq^{m}}{p_{m}},  \quad \text{which implies} \quad n  \frac{p_{m-1}}{p_{m}} \le a, \text{ that is, } n \le  \frac{ap_{m}}{p_{m-1}} \le aq+1. 
$$
In our proofs of  Lemma \ref{Zanella} and Theorem \ref{Serre}, we have avoided any double counting argument. But if we were to permit it, 
then Lemma \ref{Zanella} can be proved more easily by 
counting the set 
$
\{ (P, H) \in X \times \dP^m : P \in H \} 
$
in two ways using the first and the second projections, which yields $|X| p_{m-1} \le a p_m$, 
and hence $|X|\le aq+1$.  
\end{remark}

\section{Tsfasman-Boguslavsky Conjecture for Quadrics} 
\label{sec3}

In this section, we will consider the conjectural formula \eqref{TBr} for systems \eqref{Fr} where each $F_i$ is homogeneous of degree $2$. In other words, we consider intersections of quadrics. 
To begin with, observe that the maximum 
number of linearly independent homogeneous polynomials in $\Fq[x_0, \dots , x_m]$ of degree $2$ is $\delta_{m}$, where 
\begin{equation}
\label{delta}
\delta_{j} : = \binom{j+2}{2} = 1+ 2 + \cdots + (j+1) \quad \text{ for any $j\in \Z$ with } j \ge -1.
\end{equation}
Note that $0= \delta_{-1} < 1 = \delta_0 < \delta_1 < \dots < \delta_m$. 
 The following result is a restatement of \cite[Thm. 3.4]{Z}. 
As usual, $ \lfloor c \rfloor$ denotes  the integer part of a real number $c$.
 
\begin{theorem}[Zanella]  
\label{ZanellaThm} 
Let $r$ be a positive integer $\le \delta_m$, and let $F_1, \dots , F_r$ be linearly independent homogeneous polynomials in $\Fq[x_0, \dots , x_m]$ of degree $2$. If $k$ is the unique integer such that 
$-1\le k < m$ and $\delta_m - \delta_{k+1} < r \le \delta_{m} - \delta_{k}$, then 
\begin{equation}
\label{Zr}
|V(F_1, \dots , F_r) | \le p_k + \lfloor q^{\epsilon -1} \rfloor, \quad \text{ where } \quad \epsilon: =  \delta_{m} - \delta_{k} - r.
\end{equation}
\end{theorem}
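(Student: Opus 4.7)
The plan is to proceed by induction on $m$, using Lemma \ref{Zanella} as the engine at each step. The base case $m = 1$ is a direct finite check: the space of binary quadratic forms has dimension $\delta_1 = 3$, and $r$ linearly independent binary quadrics have at most $2$, $1$, or $0$ common zeros in $\PP^1(\Fq)$ for $r = 1, 2, 3$, matching the formula $p_k + \lfloor q^{\epsilon - 1} \rfloor$ for the corresponding values of $(k, \epsilon)$.

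For the inductive step, set $X = V(F_1, \ldots, F_r) \subseteq \PP^m$ and $W = \mathrm{span}(F_1, \ldots, F_r) \subseteq \Fq[x_0, \ldots, x_m]_2$. Lemma \ref{Zanella} gives $|X| \le aq + 1$ with $a := \max_{H \in \dP^m} |X \cap H|$. For a hyperplane $H = V(\ell)$, the restriction map on quadrics has kernel $\ell \cdot \Fq[x_0, \ldots, x_m]_1$ of dimension $m+1$, so the restricted polynomials $F_1|_H, \ldots, F_r|_H$ span a subspace of the $\delta_{m-1}$-dimensional space of quadrics on $H \cong \PP^{m-1}$ of dimension $r - s_H$, where $s_H := \dim\bigl(W \cap \ell \cdot \Fq[x_0, \ldots, x_m]_1\bigr)$. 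If $r - s_H > 0$, I would extract $r - s_H$ linearly independent representatives and apply the inductive hypothesis on $H$, obtaining $|X \cap H| \le p_{k'} + \lfloor q^{\epsilon' - 1} \rfloor$ with $(k', \epsilon')$ computed from $(m - 1, r - s_H)$; if $r - s_H = 0$, then every $F_i$ is divisible by $\ell$, so $H \subseteq X$ and $|X \cap H| = p_{m-1}$.

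Combining these bounds, one maximises over $H$ and verifies the arithmetic inequality $aq + 1 \le p_k + \lfloor q^{\epsilon - 1} \rfloor$. The combinatorial identities $\delta_m = \delta_{m-1} + (m+1)$ (matching the dimension of the restriction kernel) and $p_k = q p_{k-1} + 1$ are the bridges that align the two sides across the parameter shift $(m, r) \mapsto (m - 1, r - s_H)$. A separate subcase treats hyperplanes contained in $X$, where the bound $p_{m-1}$ replaces the induction output and must be checked directly against the target.

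The principal obstacle is this final algebraic verification, which splits into subcases indexed by the value of $s_{H^*}$ at the hyperplane $H^*$ attaining the maximum. One must show that, for every feasible $s_{H^*}$, the ensuing pair $(k', \epsilon')$ produces a bound $(p_{k'} + \lfloor q^{\epsilon' - 1} \rfloor) q + 1$ that does not exceed $p_k + \lfloor q^{\epsilon - 1} \rfloor$. Boundary cases need separate care: $\epsilon = 0$ (the floor term vanishes), $\epsilon = k + 1$ (the top of a $k$-band, where $(k', \epsilon')$ may land at the opposite extreme), and the degenerate $k = -1$ where $p_k = 0$ forces $X = \emptyset$ and must be argued from the very large codimension of $W$. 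Pinning down how large $s_{H^*}$ can be, given that $W$ has codimension $\delta_m - r$ in $\Fq[x_0, \ldots, x_m]_2$ and $\ell \cdot \Fq[x_0,\ldots,x_m]_1$ has dimension $m+1$, is the key estimate that ties the pieces together; once this is established, Zanella's bound follows.
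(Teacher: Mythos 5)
The paper does not actually prove this theorem: it is quoted from Zanella \cite[Thm.~3.4]{Z}, whose argument works with the quadric Veronese variety in $\PP^{\delta_m-1}$ and inducts on the codimension $r$ of the linear section there (adding one quadric is a hyperplane section of the \emph{ambient} $\PP^{\delta_m-1}$, so the counting lemma is applied in that space), anchored at $r=\delta_m-\delta_k$ by a separate geometric fact about spans of subsets of the Veronese variety. Your induction on $m$ via hyperplanes of $\PP^m$ is a genuinely different mechanism, and as described it does not close. The step you defer as ``the final algebraic verification'' is not a verification: the inequality $\bigl(p_{k'}+\lfloor q^{\epsilon'-1}\rfloor\bigr)q+1\le p_k+\lfloor q^{\epsilon-1}\rfloor$ is false in general, and there is no a priori bound keeping $s_{H^*}$ small at the maximizing hyperplane. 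Concretely, take $m=3$, $r=5$, $W=\langle x_0x_1,\,x_0x_2,\,x_1^2,\,x_1x_2,\,x_1x_3\rangle$. The target bound is $p_1+q=2q+1$, and indeed $X=V(x_1,x_0)\cup V(x_1,x_2)$ has exactly $2q+1$ points. But $X$ lies in the hyperplane $V(x_1)$, so $a=\max_H|X\cap H|=|X|=2q+1$ and Lemma~\ref{Zanella} returns only $|X|\le 2q^2+q+1$; alternatively, for $H=V(x_0)$ one has $s_H=2$, the restricted system has $3$ independent quadrics on $\PP^2$, the inductive bound for $(m',r')=(2,3)$ is $p_1=q+1$, and $aq+1$ becomes $q^2+q+1$, again far above $2q+1$.

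The structural reason is that the parameters only align ($k'=k-1$, $\epsilon'=\epsilon-1$, so that $q(p_{k-1}+q^{\epsilon'-1})+1=p_k+q^{\epsilon-1}$) in the top band $k=m-1$; this is why the analogous scheme works for Serre's inequality but not here. In the lower bands the extremal configurations are unions of linear subspaces that typically lie inside a hyperplane, so the estimate $|X|\le aq+1$ loses a factor of $q$, and a hyperplane $H=V(\ell)$ with $W\cap\ell\cdot\Fq[x_0,\dots,x_m]_1\ne 0$ forces a decomposition $X\subseteq H\cup L$ with $L$ a small linear space that must be exploited directly rather than fed back into Lemma~\ref{Zanella}. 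That dichotomy (together with the anchor bound $|V(F_1,\dots,F_r)|\le p_k$ when $r=\delta_m-\delta_k$, which your sketch never establishes) is the actual content of the proof and is absent from the proposal.
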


We shall 
deduce from Theorem \ref{ZanellaThm} that if $d=2$, then the TBC
is true when $r \le m+1$ and false, in general, when $r > m+1$ and $m>2$. 

\begin{corollary}
\label{cor1}
Suppose $d=2$ and 
$1\le r \le m+1$. Then \eqref{TBr} holds. In particular, 
$$
\max_{F_1, \dots , F_r} |V(F_1, \dots , F_r) | = p_{m-1} + \lfloor q^{m -r} \rfloor = \begin{cases} p_{m-1} + q^{m-r} & \text{ if } r=1, \dots , m, \\ p_{m-1} & \text{ if } r= m+1. \end{cases}
$$
where the maximum is over linearly independent homogeneous polynomials $F_1, \dots , F_r$ of degree $d$ in 
$\Fq[x_0, x_1, \dots , x_m]$.
\end{corollary}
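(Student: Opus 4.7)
The plan is to read the upper bound off Theorem \ref{ZanellaThm} and to match it both with the conjectural expression \eqref{TBr} and with the cardinality of an explicit extremal configuration. A direct calculation gives $\delta_m - \delta_{m-1} = \binom{m+2}{2} - \binom{m+1}{2} = m+1$, so for $1 \le r \le m+1$ the unique integer $k$ appearing in Theorem \ref{ZanellaThm} is $k = m-1$, and the associated exponent is $\epsilon = \delta_m - \delta_{m-1} - r = m+1-r$. Substituting into \eqref{Zr} yields, for any linearly independent quadrics $F_1, \dots, F_r \in \Fq[x_0, \dots, x_m]$,
$$
|V(F_1, \dots, F_r)| \le p_{m-1} + \lfloor q^{m-r}\rfloor.
$$

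Next I would verify that the right-hand side of \eqref{TBr} equals this same quantity. In descending lex order the first element of $\Lambda(2,m)$ is $(2, 0, \dots, 0)$, and for $r = 2, \dots, m+1$ the $r$th element is the tuple with a $1$ in positions $1$ and $r$ and zeros elsewhere; in every case $j = 1$. For $r = 1$, \eqref{TBr} simplifies to $p_{m-2} + 2(p_{m-1} - p_{m-2}) = 2 p_{m-1} - p_{m-2} = p_{m-1} + q^{m-1}$. For $2 \le r \le m$, the identity $p_{j} - p_{j-1} = q^{j}$ (valid for $j \ge 0$) reduces \eqref{TBr} to
$$
p_{m-2} + (p_{m-1} - p_{m-2}) + (p_{m-r} - p_{m-r-1}) = p_{m-1} + q^{m-r}.
$$
For $r = m+1$ the entry $\nu_{m+1} = 1$ falls outside the summation range $j \le i \le m$ of \eqref{TBr}, which therefore collapses to $p_{m-2} + (p_{m-1} - p_{m-2}) = p_{m-1}$, matching $p_{m-1} + \lfloor q^{-1}\rfloor$. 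Hence the Zanella bound coincides with the conjectured formula throughout $1 \le r \le m+1$.

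Finally, to attain the bound I would produce an explicit extremal family. For $1 \le r \le m$, take $F_i := x_0 x_i$ for $i = 1, \dots, r$; these are distinct monomials and hence linearly independent, and their common zero locus is the union $V(x_0) \cup V(x_1, \dots, x_r)$ of a hyperplane and an $(m-r)$-dimensional linear subspace of $\PP^m$. Inclusion-exclusion gives
$$
|V(x_0) \cup V(x_1, \dots, x_r)| = p_{m-1} + p_{m-r} - p_{m-r-1} = p_{m-1} + q^{m-r}.
$$
For $r = m+1$, the $m+1$ quadrics $x_0^2, x_0 x_1, \dots, x_0 x_m$ are linearly independent and cut out exactly the hyperplane $V(x_0)$, contributing $p_{m-1}$ points. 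Since Theorem \ref{ZanellaThm} does the real work, there is no serious obstacle; the only care needed is in the boundary cases $r = 1$ (where the $\nu$-pattern is $(2, 0, \dots, 0)$ rather than the generic two-unit form) and $r = m+1$ (where $\lfloor q^{-1}\rfloor = 0$ and the construction must be adjusted by including $x_0^2$).
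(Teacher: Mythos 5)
Your proposal is correct and follows essentially the same route as the paper: identify $k=m-1$ in Zanella's theorem to get the upper bound $p_{m-1}+\lfloor q^{m-r}\rfloor$, check that this coincides with the Tsfasman--Boguslavsky expression for the exponent vectors $e_1+e_i$, and attain the bound with the quadrics $x_0x_1,\dots,x_0x_r$ (adding $x_0^2$ when $r=m+1$). All the boundary cases ($r=1$ and $r=m+1$) and the inclusion--exclusion count are handled exactly as in the paper's proof.
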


\begin{proof}
For $1 \le i \le m+1$, denote by $e_i$ the $(m+1)$-tuple with $1$ in $i$th place and $0$ elsewhere. 
Lexicographically, the first $m+1$ exponent vectors of monomials of degree $2$ in $m+1$ variables are 
$e_1+e_i$ for $i=1, 2, \dots , m+1$. Thus if $1\le r \le m$, then the corresponding Tsfasman-Boguslavsky bound in \eqref{TBr} is given by 
$$
T_r = p_{m-2} +  \left( p_{m-1} - p_{m-2} \right) +  \left( p_{m-r} - p_{m-r-1} \right)  =    p_{m-1} + q^{m-r} ,
$$
whereas if $r=m+1$, then $T_r = p_{m-1} =  p_{m-1} + \lfloor q^{m -r} \rfloor$. On the other hand, if $r\le m+1$, then the unique $k\in \Z$ satisfying $-1\le k < m$ and $\delta_m - \delta_{k+1} < r \le \delta_{m} - \delta_{k}$ is clearly $k = m-1$, and hence the corresponding Zanella bound in \eqref{Zr} is 
$$
Z_r =  p_{m-1} + \lfloor q^{\delta_{m} - \delta_{m-1} - r -1} \rfloor = p_{m-1} + \lfloor q^{m -r} \rfloor = T_r.
$$
Thus 
Theorem \ref{ZanellaThm} implies that $|V(F_1, \dots , F_r) | \le T_r$ for $1\le r \le m+1$. It remains to show that the bound $T_r$ is attained if $r \le m+1$. 
To this end, consider
$$
 Q_i = x_0 x_i \; \text{ for } \; i =1, \dots , m \quad \text{ and } \quad Q_{m+1}:= x_0^2. 
$$
For any $r\le m+1$, it is clear that  $Q_1, \dots , Q_r$ are linearly independent homogeneous polynomials of degree~$2$ in $\Fq[x_0, \dots , x_m]$. 
If $r\le m$, then the common zeros in $\PP^m$ of $Q_1, \dots , Q_r$ have homogeneous coordinates   
of the type $(a_0:a_1: \dots : a_m)$, where either (i) $a_0=0$ or (ii) $a_0 \ne 0$ and  $a_1 = \dots = a_r=0$. Hence 
$$
|V(Q_1, \dots , Q_r) | =  p_{m-1} + q^{m-r} = T_r \quad \text{ if } 1\le r \le m. 
$$
In case $r=m+1$, the common zeros of $Q_1, \dots , Q_r$ are only of the first type  
and so $|V(Q_1, \dots , Q_{m+1}) | =  p_{m-1} = T_{m+1}$. This completes the proof. 
\end{proof}

\begin{remark} 
\label{lastwt}
Roughly speaking, Corollary \ref{cor1} shows that \eqref{TBr} holds for small values of $r$. It is not difficult to show that it also holds for a few large values of $r$. For instance, when $r= \delta_m$, the intersection of $r$ linearly independent quadrics in $\PP^m$ 
is empty and thus has $0$ elements, quite in accordance with \eqref{TBr}. A little 
more generally, if $m\ge 2$, then one can see that the last $5$ exponent vectors of monomials of degree $2$ in $m+1$ variables are $(0, \dots , 0, 0, 2), \; (0, \dots , 0, 1, 1), \; (0, \dots , 0, 2, 0), \; 
(0, \dots , 1, 0, 1)$, and $(0, \dots , 1, 1, 0)$. The corresponding Tsfasman-Boguslavsky bound in \eqref{TBr} is given, respectively, by
$$
0, \quad 1, \quad 2, \quad q+1, \quad \text{and} \quad q+2. 
$$
One can check that these coincide with the corresponding Zanella bounds in \eqref{Zr}.  Moreover, it is easy to see that the bounds are attained by taking in the first case the set, say $\mathscr{Q}$ of all monomials of degree $2$ in $\Fq[x_0,x_1, \dots , x_m]$ and in the remaining four cases, taking the set obtained from  $\mathscr{Q}$ by successively dropping $x_m^2$, $x_{m-1}^2$, $x_{m-1}x_m$, and $x_{m-2}^2$. Using these observations together with Corollary \ref{cor1} we see as a special case that  the 
TBC 
is true for quadrics in $\PP^2$, i.e., \eqref{TBr} holds if $d=2$ and $m=2$. 
\end{remark}    

\begin{corollary}
\label{cor2}
If $d=2$ and $m > 2$, then \eqref{TBr} does not hold in general. In fact, \eqref{TBr} is false for at least $\binom{m-1}{2}$ values of 
positive integers $r$ with $m+1 < r \le \delta_m$.
\end{corollary}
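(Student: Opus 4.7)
My plan is to combine Theorem \ref{ZanellaThm} with a direct numerical comparison: whenever the Zanella bound $Z_r$ from \eqref{Zr} is strictly less than the value $T_r$ predicted by the right-hand side of \eqref{TBr}, the true maximum $\max |V(F_1, \dots, F_r)|$ is at most $Z_r$, hence strictly less than $T_r$, and \eqref{TBr} must fail at that $r$. The problem thus reduces to exhibiting at least $\binom{m-1}{2}$ values $r \in \{m+2, \dots, \delta_m\}$ for which $T_r > Z_r$.

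First, I would parametrize by the exponent vector of the $r$-th monomial. Since $d=2$, every element of $\Lambda(2,m)$ has the form $e_j + e_l$ with $1 \le j \le l \le m+1$, where $e_i$ denotes the standard basis vector in $\Z^{m+1}$. For $j \ge 2$, the number of vectors in $\Lambda(2,m)$ whose first nonzero coordinate sits in a position $\ge j$ equals $\delta_{m-j+1}$, so $e_j + e_l$ occupies position
$$
r(j,l) := \delta_m - \delta_{m-j+1} + (l - j + 1).
$$
Using $\delta_{s+1} - \delta_s = s + 2$, this $r(j,l)$ lies in the Zanella interval $(\delta_m - \delta_{m-j+1},\, \delta_m - \delta_{m-j}]$, so in the notation of Theorem \ref{ZanellaThm} one has $k = m-j$ and $\epsilon - 1 = m - l$; consequently $Z_{r(j,l)} = p_{m-j} + \lfloor q^{m-l} \rfloor$. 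Substituting $\nu = 2e_j$ or $\nu = e_j + e_l$ into \eqref{TBr}, recalling that the sum truncates at $i = m$ (silently dropping $\nu_{m+1}$) and that $p_s = 0$ for $s < 0$, splits $T_{r(j,l)}$ into three easily computed sub-cases according to whether $l = j$, $j < l \le m$, or $l = m+1$.

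Next, I would expand $T_{r(j,l)} - Z_{r(j,l)}$ case by case, using $p_a - p_b = q^{b+1} + \cdots + q^a$. The intended outcome is that this difference is a nonempty sum of nonnegative powers of $q$ precisely when $j \ge 2$ and $l \le m-1$, while it vanishes when $l \in \{m, m+1\}$. Counting the admissible pairs yields
$$
\#\{(j,l) \in \Z^2 : 2 \le j \le l \le m-1\} \;=\; \sum_{j=2}^{m-1}(m-j) \;=\; \binom{m-1}{2},
$$
and each corresponding $r(j,l)$ satisfies $m+1 < r(j,l) < \delta_m$, producing the required counterexamples beyond the range already covered by Corollary \ref{cor1}.

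The main obstacle I anticipate is purely bookkeeping: juggling the three sub-cases for $T_{r(j,l)}$, the boundary situations where $m - 2j$ or $m - l - j$ becomes negative (so that $p_s$ vanishes), and the omission of $\nu_{m+1}$ from the sum in \eqref{TBr}. None of this is conceptually deep, but the cases must be handled simultaneously so that no spurious failure is claimed and no genuine one is missed.
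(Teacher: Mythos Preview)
Your proposal is correct and follows essentially the same route as the paper: compare the Tsfasman--Boguslavsky value $T_r$ with the Zanella bound $Z_r$ and count the pairs where $T_r>Z_r$. The only difference is cosmetic---you parametrize by the pair $(j,l)$ with $e_j+e_l$ the $r$th exponent vector, whereas the paper uses $(k,i)$ with $k=m-j$ and $i=l-j+1$; under this change of variables your condition $2\le j\le l\le m-1$ is exactly the paper's $0\le k<m-1$, $1\le i\le k$, and both counts give $\binom{m-1}{2}$.
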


\begin{proof}
Let $e_i$ ($1 \le i \le m+1$) be as in the proof of Corollary \ref{cor1}. Also let $k$ be the 
unique integer such that $-1\le k < m$ and $\delta_m - \delta_{k+1} < r \le \delta_{m} - \delta_{k}$. 
Write $r = \delta_m - \delta_{k+1} + i$ so that $1\le i \le k + 2$. Observe that the $r$th element, in descending lexicographic order, among the exponent vectors of monomials of degree $2$ in $m+1$ variables, is precisely $e_{m-k} + e_{m-k + i-1}$. In particular its first nonzero coordinate is in the position $j:=m-k$, and thus, with notation as in \eqref{pm}, the corresponding Tsfasman-Boguslavsky bound in \eqref{TBr} is given by 
\begin{equation*}
\begin{split}
T_r &= p_{m - 2j} + (p_{m- j} - p_{m - 2j}) + (p_{m - (j + i - 1)} - p_{m - j - (j+ i - 1)}) \\
&= p_{m-j} + p_{m - i - j + 1} - p_{m - i - 2j + 1} \\
& = p_k + p_{k - i + 1} - p_{2k - m - i + 1}. 
\end{split}
\end{equation*}
On the other hand, the corresponding Zanella bound in \eqref{Zr} is given by 
$$
Z_r = p_k + \lfloor q^{\epsilon -1} \rfloor, \ \text{ where }\;  \epsilon = \delta_m - \delta_k - (\delta_m - \delta_{k+1} + i) = \delta_{k+1} - \delta_k - i = k + 2 - i.
$$
It follows that  if $0 \le k < m -1$ and $1\le i \le k$, then 
$$
T_r - Z_r = p_{k-i} - p_{2k - m - i + 1} \ge q^{k-i} > 0,  
$$
and so
$Z_r < T_r$. Thus Theorem \ref{ZanellaThm} implies that $T_r$ can not be the maximum of 
$|V(F_1, \dots , F_r)|$ for arbitrary sets of $r$ linear independent   homogeneous polynomials of degree~$2$ in $\Fq[x_0, \dots , x_m]$. The number of such values of $r = \delta_m - \delta_{k+1} + i$ is
$$
\sum_{k=0}^{m-2} \sum_{i=1}^k 1 = \sum_{k=0}^{m-2} k = \frac{(m-1)(m-2)}{2} = \binom{m-1}{2}.
$$
Evidently this is positive if $m>2$. This proves the corollary.
\end{proof}
%

\begin{example}
The simplest case where the TBC 
is false seems to be that of intersections of 5 linearly independent quadrics in $\PP^3$. One can see in this case that Tsfasman-Boguslavsky bound is $T_5 = 2(1+q)$, whereas the Zanella bound is $Z_5 = 1+ 2q$, which is strictly smaller.
\end{example}

\begin{remark}
\label{ThreeConj} 
The Tsfasman-Boguslavsky Conjecture  stated in the Introduction (and abbreviated as TBC) is, in fact, a culmination of several conjectures that can be found in the paper of Boguslavsky  \cite{Bog}, with at least one of the conjectures ascribed to Tsfasman. More precisely, the TBC is Corollary~5 of \cite{Bog} whose hypothesis  is that Conjecture~3 of \cite{Bog} holds and whose ``proof" uses Lemma~4 of \cite{Bog}.  
For ease of reference, we state below 
Conjectures 1, 2 and 3 of  \cite{Bog}. 
To this end, let us first 
introduce some 
terminology. A projective variety $X$ in $\PP^m$ over $\Fq$ is said to be
\textit{linear} if  
its $\Fq$-rational points lie on the linear components of $X$,
An $m$-tuple $(\a_1, \a_2, \dots, \a_m) \in \mathbb{Z}^{m}$ is said to be the \textit{dim-type} of a projective variety $X$ in $\PP^m$ if $X$ has 
$\a_i$ irreducible components of codimension $i$ for $i = 1, 2, \dots, m$. 
Given a finite family $\mathscr{X}$ of projective varieties in $\PP^m$, 
an element $X$ of $\mathscr{X}$ 
is said to be:  
\begin{enumerate}
\item[(i)] \textit{maximal} in $\mathscr{X}$ if $|X (\Fq)| =  \max \{ |Y(\Fq)| : Y \in \mathscr{X} \}$, and 
\item[(ii)] \textit{dim-maximal} in $\mathscr{X}$ if the dim-type of $X$ is maximal (among the dim-types of all the elements of $\mathscr{X}$) with respect to the lexicographic order on $\Z^m$. 
\end{enumerate}
The 
conjectures in \cite{Bog} concern the family, say $\mathscr{X}_r$,  of projective varieties in $\PP^m$ defined by $r$ linearly independent homogeneous polynomials in $\Fq[x_0, x_1, \dots, x_m]$ of degree $d$, and are as follows. Here $\Lambda(d, m)$ is as in the Introduction. 
\begin{enumerate}
\item[1.] 
There exists a maximal family in $\mathscr{X}_r$ which is linear.
\item[2.] 
If $(\nu_1, \dots, \nu_{m+1})$ is the $r$-th element of $\Lambda(d, m)$ in descending lexicographic order,  then the dim-type of a dim-maximal element of $\mathscr{X}_r$ is $(\nu_1, \dots, \nu_m)$.
\item[3.] 
There exists a maximal family in $\mathscr{X}_r$ which is dim-maximal.
\end{enumerate}
We remark that our positive result Corollary \ref{cor1} and its proof (especially the examples therein) imply immediately that Conjectures 1 and 3 above 
hold in the affirmative
when $d=2$ and $r\le m+1$. Moreover, it is not difficult to also deduce Conjecture~2 
in this case. On the other hand, the negative result in Corollary \ref{cor2} does not necessarily imply that Conjectures 1, 2, and 3  above 
are false. 
It should also be remarked that the Tsfasman-Boguslavsky Conjecture stated in the Introduction has two aspects: (i) the expression on the right hand side of the equality in \eqref{TBr} is an upper bound for the number of common zeros of a system of $r$ linearly independent homogeneous polynomials of degree $d$ in $\Fq[x_0, x_1, \dots , x_m]$, and (ii) this upper bound is attained. Our negative result in Corollary \ref{cor2} shows only that 
(ii) is false, but does not rule out the possibility that  
(i) holds, in general. 
\end{remark}

\section{Applications and Supplements} 
\label{sec:codes}

In this first subsection below, we outline the relevance of TBC 
to coding theory, and in the second subsection, we provide a comparison with an older conjecture of Lachaud
\cite[Conj. 12.2]{GL} that is also stated, albeit with much too general hypothesis, by Boguslavsky \cite[Conj. 4]{Bog}, and 
settled recently by Couvreur \cite{C}. 

\subsection{Projective Reed Muller Codes}
Fix positive integers $m$, $d$ and let $n:= p_m$. Each point of $\PP^m(\Fq)$ admits a unique representative in $\Fq^{m+1}$ in which the first nonzero coordinate is $1$. Let 
$P_1, \dots, P_{n}$ be an ordered listing of such representatives in $\Fq^{m+1}$ of points of $\PP^m(\Fq)$.
Denote by $\Fq[x_0, \dots , x_m]_d$ the space of homogeneous polynomials in $\Fq[x_0, \dots , x_m]$ 
of degree $d$  together with the zero polynomial. Define 
$$
\PRM_q(d,m):=\{ \left( f(P_1), \dots , f(P_n)\right) : f\in \Fq[x_0, \dots , x_m]_d\}.
$$
Evidently, this is a linear subspace of $\Fq^n$, and hence a $q$-ary (linear) code of length $n$. It is called the projective Reed-Muller code. This code is analogous to a more widely studied class of codes called (affine or generalized) Reed-Muller code $\RM_q(d,m)$. See, for example, \cite{DGM, HP} for more on 
Reed-Muller codes and   \cite[Prop. 4]{BGH2} for a summary of several of its basic properties. The study of projective Reed-Muller codes was pioneered by Lachaud \cite{L1, L2} and S{\o}rensen \cite{So}.  The relation with the TBC 
is through the notion of generalized Hamming weights, also known as higher weights, that goes back at least to Wei \cite{W}. In general, for any $q$-ary linear code $C$ of length $n$ and dimension $k$, 
and any $D\subseteq C$, one defines
$$ 
\w (D):= \left| \left\{i\in\{1, \dots , n\} : c_i\ne 0 \text{ for some } c\in D\right\} \right|.
$$
Now for $r=1, \dots ,k$, the 
$r$th \emph{higher weight} of  $C$ is
defined by  
$$
d_r (C) = \min \{ \w( D) : D \mbox{ is a subspace of $C$ with } 
\dim D = r\},
$$
It may be remarked that $d_1(C)$ is what is called the minimum distance of $C$. The relationship of $\PRM_q(d,m)$ with \eqref{TBr} is as follows: if $d< q$, then for 
$1\le r \le \binom{m+d}{d}$,  
\begin{equation}
\label{drTBC}
d_r\left(\PRM_q(d,m) \right) = p_m - \max_{F_1, \dots , F_r} |V(F_1, \dots , F_r) | , 
\end{equation}
where 
the maximum is 
over linearly independent $F_1, \dots , F_r$  in $\Fq[x_0, x_1, \dots , x_m]_d$.  
To see \eqref{drTBC} it suffices to use the relationship between linear codes and projective systems as described in \cite[Thm. 1.1.14]{TVN} and to note that when $d< q$, the code $\PRM_q(d,m)$ corresponds to the projective system given by the $\Fq$-rational points of the Veronese variety corresponding to the Veronese embedding of $\PP^m$ of degree $d$.  Thus it is clear that the 
TBC 
admits an equivalent statement in terms of an explicit formula for the higher weights of projective Reed-Muller codes. In particular, \eqref{drTBC} and Corollary \ref{cor1} 
imply that 
\begin{equation}
\label{drtwo}
d_r\left(\PRM_q(2,m) \right) =  q^m - \lfloor q^{m-r} \rfloor \quad  \text{ for } r=1, \dots , m+1 
\end{equation}
whereas Remark \ref{lastwt} shows that 
$$
d_{\delta_m - r}\left(\PRM_q(2,m) \right) = \begin{cases} p_m -  r & \text{ if } r=0, 1, 2,  \\
p_m - (q+r -2) & \text{ if } r = 3, 4. \end{cases}
$$
It may be noted that \eqref{drtwo} can be viewed as a generalization of the last theorem in \cite{L1}. We end this subsection by remarking that an affine analogue of the Tsfasman-Boguslavsky Conjecture is true, in general for $1< d < q$, thanks to 
the complete determination of all higher weights of the Reed-Muller code $\RM_q(d,m)$ by 
Heijnen and Pelikaan \cite[Thm. 5.10]{HP}.

\subsection{Comparison with a Theorem of Couvreur} 
In a recent preprint \cite{C}, Couvreur has proved the following result,  answering as a special case a conjecture that goes back to Lachaud and stated in \cite[Conj. 12.2]{GL} (see also \cite[Conj. 5.3]{LR}). 

\begin{theorem}[Couvreur]
\label{Couv}
Let $X$ be a nondegenerate projective variety 
in $\PP^m$ defined over $\Fq$. Suppose the irreducible components of $X$ have 
dimensions $n_1, \dots , n_t$ and degrees ${\delta}_1, \dots , \delta_t$, respectively. If 
$n_i < m $ for all $i=1, \dots , t$, 
 then 
\begin{equation}
\label{c1}
 |X(\Fq)| \le p_{2n - m} + \sum_{i=1}^t \delta_i \left( p_{n_i} - p_{2n_i -m} \right) \quad \text{where} \quad n:= \max\{n_1, \dots , n_t\}.
\end{equation}
In particular, if $X$ 
is equidimensional 
of dimension $n$ and  degree $\delta$,   
then 
\begin{equation}
\label{c2}
 |X(\Fq)| \le {\delta} p_n - ({\delta}-1) p_{2n - m} =  {\delta}(p_n - p_{2n - m}) + p_{2n - m}.
\end{equation}
Moreover, the upper bound is optimal for equidimensional varieties. 
\end{theorem}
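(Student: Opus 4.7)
The plan is to establish the equidimensional bound \eqref{c2} first, and then deduce the general inequality \eqref{c1} from it by analyzing the irreducible decomposition. For \eqref{c2}, I would proceed by induction on the ambient dimension $m$. The base case is $n = m - 1$: in that case $X$ is an equidimensional projective hypersurface of degree $\delta$, equal to $V(F)$ for $F$ the product of the defining polynomials of its components, and Theorem~\ref{Serre} gives $|X(\Fq)| \le \delta q^{m-1} + p_{m-2}$, which is precisely $\delta(p_n - p_{2n-m}) + p_{2n-m}$ with $n = m - 1$.

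For the inductive step with $n < m - 1$, I would fix a hyperplane $H \in \dP^m$ containing no irreducible component of $X$; such $H$ exists because every component has dimension strictly less than $m$ (for small $q$ a separate argument, for instance via base change to $\Fqm$, may be needed). The section $Y = X \cap H$ is then equidimensional of dimension $n - 1$ and degree $\delta$ in $H \simeq \PP^{m-1}$, so by the inductive hypothesis $|Y(\Fq)| \le \delta(p_{n-1} - p_{2n-m-1}) + p_{2n-m-1}$. Applying Lemma~\ref{Zanella}, which bounds $|X(\Fq)| \le aq + 1$ with $a$ the maximum size of a hyperplane section of $X$, and then using the identity $qp_{j-1} + 1 = p_j$ together with the inductive bound on $a$, yields the desired estimate $|X(\Fq)| \le \delta(p_n - p_{2n-m}) + p_{2n-m}$.

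The main obstacle I anticipate is that Zanella's lemma invokes the maximum over \emph{all} hyperplanes, whereas the inductive hypothesis controls only those sections by hyperplanes not containing a component of $X$. Handling the case where the maximising hyperplane does contain an irreducible component of $X$ is the delicate step, and presumably requires either strengthening the inductive statement to cover degenerate sections or splitting off the offending component and bounding the residual union separately. For \eqref{c1}, I would then decompose $X = X_1 \cup \dots \cup X_t$ and apply \eqref{c2} to each $X_i$; a naive union bound produces $\sum_i p_{2n_i - m}$ in place of the single $p_{2n-m}$ in the statement, so an inclusion--exclusion argument is needed in which the lower-dimensional components overlap with some top-dimensional $X_j$ in a subvariety absorbing the excess, with the nondegeneracy hypothesis ruling out pathological configurations. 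Finally, optimality for equidimensional varieties is exhibited by an explicit construction, namely a union of $\delta$ linear $n$-planes in $\PP^m$ sharing a common $(2n-m)$-plane, which attains equality in \eqref{c2}.
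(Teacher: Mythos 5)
First, a point of reference: the paper contains no proof of this theorem. It is quoted from Couvreur's preprint \cite{C} (the paper only states it in order to compare bounds), so there is no internal argument to measure your proposal against; it has to stand on its own.

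On its own terms, your outline closes nicely in the \emph{irreducible} case but has two genuine gaps exactly where you sense trouble, and neither is a technicality. The good part: for irreducible nondegenerate $X$ of dimension $n\le m-2$, every hyperplane section is proper, hence equidimensional of dimension $n-1$ and degree at most $\delta$, and the arithmetic of combining the inductive bound with Lemma~\ref{Zanella} really does work, since $q\bigl(\delta p_{n-1}-(\delta-1)p_{2n-m-1}\bigr)+1=\delta p_n-(\delta-1)p_{2n-m}$ by the identity $qp_{j-1}+1=p_j$. The first gap is the reducible equidimensional case, which is unavoidable for \eqref{c2} (the optimal examples are unions of linear spaces). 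If some hyperplane $H$ contains an $n$-dimensional component $X_1$, then $a=\max_H|X\cap H|\ge |X_1(\Fq)|$, which is of order $q^n$, so Lemma~\ref{Zanella} yields only $|X|\le aq+1\sim q^{n+1}$, far weaker than the target $\sim\delta q^n$. So the lemma, which is governed by the worst hyperplane, cannot be used as stated; one needs an argument that exploits only the \emph{typical} hyperplane section (e.g.\ the averaging identity $\sum_{H\in\dP^m}|X\cap H|=|X|\,p_{m-1}$ restricted to hyperplanes containing no component), and controlling the exceptional hyperplanes is where the real work lies. The second gap is the passage to \eqref{c1}: a union bound from \eqref{c2} applied to each component gives $\sum_i\bigl(\delta_i(p_{n_i}-p_{2n_i-m})+p_{2n_i-m}\bigr)$, which exceeds \eqref{c1} by $\sum_{i\ne i_0}p_{2n_i-m}$, and your proposed fix---that each $X_i$ meets the union of the others in at least $p_{2n_i-m}$ rational points---is false in general: two components with $n_i+n_j\ge m$ must intersect in a variety of positive dimension over $\overline{\Fq}$, but that intersection need not be linear and can have very few $\Fq$-points. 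So ``inclusion--exclusion'' is not a proof here; this deduction is the technical heart of Couvreur's paper and requires a dedicated lemma on unions. Your optimality example ($\delta$ linear $n$-planes through a common $(2n-m)$-plane, pairwise meeting exactly there) is the standard one and is fine, provided $\delta$ is small enough relative to $q$ for such a configuration to exist.
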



The original conjecture by Lachaud assumed $X$ be a complete intersection (and hence equidimensional) 
of degree $\delta \le q+1$ and 
had an additional hypothesis that $2n \ge m$, lest the bound in \eqref{c2} reduces to a known inequality (cf. \cite[Thm. 3]{Bog}, \cite[Prop. 12.1]{GL}). 
Just like the TBC, 
the conjecture by Lachaud reduces to Serre's 
inequality \eqref{SerreIneq} when codim $X = m - n =1$. But for codim$X > 1$, the relation between two conjectures above may not appear sufficiently clear and it may be worthwhile to try to make it clearer. First, it should be noted that the hypothesis of TBC is amenable to an easy verification---one just have to check that the defining equations have the same degree and are linearly independent. On the other hand, determining the dimensions and degrees of irreducible components from a given set of equations defining the variety can be quite difficult. In fact, even when the variety is known to be irreducible, determining the degree may not be easy, unless of course it is a hypersurface. One basic case where 
the hypotheses of the TBC and Theorem \ref{Couv} coincide and are easily checked 
is when $X \subseteq \PP^m$ is defined by the vanishing of $r$ linearly independent homogeneous polynomials in $m+1$ variables, each of the same degree $d$, and $n = \dim X = m-r$ so that $X$ is a complete intersection. In this case $X$ is equidimensional and has degree 
$\delta = d^r$.  Assume, for simplicity, that $n\ge 0$, i.e., $r \le m$ and that $d> 1$ and $\delta \le q+1$. 
Then $(d-1, 0, \dots, 0, 1, 0, \dots, 0)$, with $1$ is in the $r$-th place, 
 is the $r$th element of $\Lambda(d,m)$. 
Consequently, 
the Tsfasman-Boguslavsky bound, say $T_r(d)$, 
on $|X(\Fq)|$ is equal to 
$$
p_{m-2}+ (d-1)(p_{m-1} - p_{m-2}) + (p_{m-r} - p_{m-r-1}) = (d-1)q^{m-1} + q^{m-r} + p_{m-2}.
$$
Also since $d - 1 \ge 1$,  putting $ p_{m-2} = (q^{m-1} -1)/(q-1)$, we see that 
\begin{equation}
\label{Trdge}
T_r(d) \ge \frac{q^m + q^{m-r+1} - q^{m-r} -1}{q-1}.
\end{equation}
On the other hand, the Couvreur bound in \eqref{c2}, say $C_r(d)$, 
 in this situation is 
$$
d^r (p_{m-r} - p_{m-2r}) + p_{m -2r} =  (d^r -1) (p_{m-r} - p_{m-2r}) + p_{m-r}.
$$ 
Since 
$d^r   \le q+ 1$, i.e., $d^r-1 \le q$, we easily see that 
$$
C_r(d)  \le   \frac{q^{m - 2r+2} \left( q^{r} - 1 \right) +\left( q^{m - r+1} - 1\right) }{q-1} 
=  \frac{ q^{m - r+2}  +  q^{m - r+1} -  q^{m -2 r+2} - 1 }{q-1}  .
$$
Comparing the right hand side of the above equation with \eqref{Trdge}, we see that $C_r(d) \le T_r(d)$ if $r\ge 2$. 
It follows that the Couvreur bound is sharper, especially when $d>2$ and $r>2$. Of course, this, by itself, does not contradict the TBC since the projective varieties 
where the Tsfasman-Boguslavsky bounds are attained are seldom equidimensional (that is to say, having  all its irreducible components of the same dimension), let alone complete intersections. 
Indeed, in the commonly applicable situation considered above, projective varieties 
attaining the Tsfasman-Boguslavsky  bound is expected to have $d-1$ common components of codimension $1$ and one of codimension $r$. As Couvreur \cite[\S 5.2]{C} has remarked, his bound in the non-equidimensional case might not be optimal. This is, in fact, true, and to see 
this, one can consider $d=2$, $r\le m$, and the example of quadrics $Q_1, \dots , Q_r$ in the proof of Corollary \ref{cor1}. As we have seen, the projective variety, say $X$, cut out by these quadrics has $p_{m-1} + q^{m-r}$ 
points. Also it is clear that $X$ has two irreducible  components, the hyperplane $x_0=0$ and the linear subspace $x_1 = \dots = x_r =0$. Both the components are linear and are complete intersections. Thus in the notation of Theorem \ref{Couv}, we have $t=2$ and $n_1=m-1$, $n_2 = m-r$, while $\delta_1 = 1 = \delta_2$. It follows that the upper bound of Couvreur in \eqref{c1} in this case is 
$$
p_{2(m-1) -m} +  \left( p_{m-1} - p_{2(m-1) -m} \right) 
+  \left( p_{m-r} - p_{2(m-r) -m} \right), 
$$
or in other words,
$$
 p_{m-1} + q^{m-r} + q^{m-r-1} + \dots + q^{m-2r+1}. 
$$
So if $r\ge 2$, then the Tsfasman-Boguslavsky bound is sharper than the Couvreur bound in this case. 
It is thus seen that the two bounds compliment each other and neither implies the other, in general.

\section*{Acknowledgements}
We are grateful to: (i)  Masaaki Homma for bringing \cite{H} to our attention and also outlining the double counting argument given in Remark \ref{rem:ZanellaLemma} to prove Lemma \ref{Zanella}, (ii) Michael Tsfasman for some helpful discussions and suggesting that Lemma \ref{Zanella} could follow from the Plotkin bound, (iii) Olav Geil for pointing out that the bound for $|Z(f)|$ in \eqref{BasicBound} is also known as Schwartz-Zippel bound, and (iv) the anonymous referee for many useful comments on an earlier version of this article.

\end{document}